\title{\textbf{A note on the codegree of finite groups}}
\author[1]{Mark L. Lewis}
\author[1]{Quanfu Yan\footnote{Corresponding author.}}
\affil[1]{Department of Mathematical Sciences, Kent State University, Kent, OH 44242, USA}
{
    \makeatletter
    \renewcommand\AB@affilsepx{: \protect\Affilfont}
    \makeatother

    \affil[ ]{\par Email addresses}
    
    \makeatletter
    \renewcommand\AB@affilsepx{, \protect\Affilfont}
    \makeatother

   \affil[ ]{\href{mailto:lewis@math.kent.edu}{lewis@math.kent.edu}}
    \affil[ ]{\href{mailto:qyan5@kent.edu}{qyan5@kent.edu}}

}
\renewcommand{\Affilfont}{\small\it}
\date{}
\newtheorem{theorem}{Theorem}[section]
\newtheorem{proposition}[theorem]{Proposition}
\newtheorem{lemma}[theorem]{Lemma}
\theoremstyle{definition}
\newtheorem{example}[theorem]{Example}
\newtheorem*{question*}{Question A}
\let\expandafter\oldproof\csname\string\proof\endcsname
\let\oldendproof\endproof
\renewenvironment{proof}[1][\proofname]{%
  \oldproof[\bfseries\scshape #1]%
}{\oldendproof}
\renewcommand{\leq}{\leqslant}
\renewcommand{\geq}{\geqslant}
\begin{document}
\maketitle

\begin{abstract}
\noindent\textbf{Abstract.} Let $\chi$ be an irreducible character of a group $G,$ and $S_c(G)=\sum_{\chi\in {\rm Irr}(G)}{\rm cod}(\chi)$ be the sum of the codegrees of the irreducible characters of $G.$ Write ${\rm fcod} (G)=\frac{S_c(G)}{|G|}.$ We aim to explore the structure of finite groups in terms of ${\rm fcod} (G).$ On the other hand, we determine the lower bound of $S_c(G)$ for nonsolvable groups and prove that if $G$ is nonsolvable, then $S_c(G)\geq S_c(A_5)=68,$ with equality if and only if $G\cong A_5.$ Additionally, we show that there is a solvable group so that it has the codegree sum as $A_5.$

\medskip

\noindent{\bf Keywords:} Character codegrees, Codegree sum, Nonsolvable groups.\\
 \noindent{\bf MSC:} 20C15, 20D05

\end{abstract}

\section{Introduction}

All groups considered in this note are finite. Let $G$ be a group and let ${\rm Irr}(G)$ be the set of complex irreducible characters of $G$. For
$\chi\in {\rm Irr}(G),$ define the number $${\rm cod}(\chi)=\frac{|G:\ker\chi|}{\chi(1)}$$ to be the {\textbf{codegree}} of $\chi.$ This definition of codegree was first introduced by Qian, Wang and Wei in \cite{Qian1}. There is a very large number of recent articles on character coddegrees, for example \cite{Isaacs2011,Qian2021,Yang2017,Du2016}. 

Many authors often explore group theory invariants related to character degrees and study how these invariants may influence the structure of $G$. The average character degree (denoted by ${\rm acd}(G)$) is such an invariant. For example, it was proved in \cite{ILM2013} by Isaacs, Loukaki and Moreto that if ${\rm acd}(G)\leq 3$, ${\rm acd}(G)<\frac{3}{2}$ or ${\rm acd}(G)< \frac{4}{3},$ then $G$ is respectively solvable, supersolvable or nilpotent. Another closely related invariant is $f(G)=\frac{\sum_{\chi\in {\rm Irr}(G)}\chi(1)}{|G|}.$ This invariant has been investigated in many papers, see for examples \cite[Chapter 11]{BerZh1997} and \cite{Moreto2015,PanLi2017,PDY2021}. Recently, Pan, Dong and Yang in \cite{PDY2021} proved that if $f(G)>2/3$ or $f(G)>2$, then $G$ is nilpotent or supersolvable respectively. Motivated by these papers, it is natural to explore the character codegree versions of known results for character degrees.

In this paper, we denote the sum of codegrees of irreducible characters of $G$ by $$S_{c}(G)=\sum_{\chi\in {\rm Irr}(G)}{\rm cod}(\chi),$$ and let ${\rm acod}(G)$ denote the average codegree of the irreducible characters of $G.$  Indeed, Wang, Qian, Lv and Chen in \cite{WQLC2024} investigated the invariant ${\rm acod}(G).$ They provided the lower bounds for ${\rm acod}(G)$ in non-solvable and non-supersolvable groups.

We consider the analog of $f(G)$ with the focus on the invariant ${\rm fcod} (G)=\frac{S_c(G)}{|G|}.$ In particular, we see what can be said regarding this invariant. Unfortunately, the examples constructed in Section 2 led us to believe that no similar results can be obtained. We show that there exist families of non-solvable groups $\{G\}$ and $\{H\}$ such that ${\rm fcod}(G)\rightarrow 0$ as $|G|\rightarrow \infty,$ and ${\rm fcod}(H)\rightarrow \infty$ as $|H|\rightarrow \infty.$ This tells us that the invariant ${\rm fcod} (G)$ has so many possibilities. Thus, additionally we discuss the special case when ${\rm fcod} (G)=1$ and provide some observations.

On the other hand, we consider the codegree sum $S_{c}(G)$ and study how it affects the structure of $G.$ Motivated by the results in \cite{AAIsaacs}, we considered the question in \cite{LYan2023}: Let $G$ be a group of order $n$ and $C_n$ a cyclic group of order $n,$ is it true that $S_{c}(G)\leq S_c(C_n)?$ We showed that this inequality holds for many classes of groups such as nilpotent groups, and it remains valid for any finite group whose order is divisible by up to 99 primes. However, it should be noted that the assertion does not hold true in all cases. In this note, we will consider the lower bound of $S_{c}(G)$ and prove  the following:

\begin{theorem}\label{thm1.1}
Let $G$ be a non-solvable group. Then   $S_c(G)\geq S_c(A_5)=68,$ with equality if and only if $G\cong A_5.$ 
\end{theorem}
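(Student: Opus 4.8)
The plan is to first record the exact value $S_c(A_5)=68$ by noting that $A_5$ is simple with character degrees $1,3,3,4,5$, so that its nontrivial characters are all faithful and contribute codegrees $60/3,60/3,60/4,60/5$, giving $1+20+20+15+12=68$. The engine of the argument is the monotonicity of $S_c$ under quotients: if $N\trianglelefteq G$ and $N\le\ker\chi$, then the codegree of $\chi$ computed in $G$ agrees with its codegree computed in $G/N$, because both equal $|G:\ker\chi|/\chi(1)$. Since the characters of $G$ whose kernel contains $N$ are exactly $\mathrm{Irr}(G/N)$, I would split $\mathrm{Irr}(G)$ accordingly and obtain $S_c(G)=S_c(G/N)+\sum_{N\not\le\ker\chi}\mathrm{cod}(\chi)\ge S_c(G/N)$, with strict inequality whenever $N\ne 1$ (since $\bigcap_{\chi}\ker\chi=1$ forces some $\chi$ with $N\not\le\ker\chi$, contributing a positive codegree).

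Next I would reduce to a convenient quotient. Choosing $N\trianglelefteq G$ maximal subject to $G/N$ being non-solvable, the group $Q:=G/N$ is non-solvable but all of its proper quotients are solvable; hence $Q$ has a unique minimal normal subgroup $M$, and $M$ must be non-abelian (otherwise $Q$ would be solvable), so $M\cong T^k$ for some non-abelian simple $T$ and $Q/M$ is solvable. The key structural consequence is a dichotomy for $\mathrm{Irr}(Q)$: since $M$ is the unique minimal normal subgroup, any $\chi\in\mathrm{Irr}(Q)$ with $M\not\le\ker\chi$ must have $\ker\chi=1$, i.e.\ $\chi$ is faithful; thus every irreducible character of $Q$ is either inflated from $Q/M$ or faithful, and at least one faithful character exists because $\bigcap_{\chi}\ker\chi=1$. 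By the monotonicity step it suffices to prove $S_c(Q)\ge 68$ with equality only for $Q\cong A_5$, and then trace equality back: $S_c(G)=68$ forces $S_c(Q)=68$, hence $Q\cong A_5$, and then $N=1$ by strictness, giving $G\cong A_5$.

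For $Q$ it remains to bound $S_c(Q)$ from below. Picking a faithful $\chi\in\mathrm{Irr}(Q)$ gives $\mathrm{cod}(\chi)=|Q|/\chi(1)$, and since $\chi(1)^2<|Q|$ (the trivial character also contributes to $\sum\chi(1)^2=|Q|$), this single term already yields $S_c(Q)>\sqrt{|Q|}$. Consequently $S_c(Q)>68$ as soon as $|Q|\ge 4624=68^2$, which disposes of all large groups at once. The finitely many monolithic groups with non-abelian socle and $|Q|<4624$ form an explicit list via the classification of finite simple groups: the possible socles $T^k$ are governed by the simple groups of order at most a few thousand (namely $A_5,\ \mathrm{PSL}(2,7),\ A_6,\ \mathrm{PSL}(2,8),\ \mathrm{PSL}(2,11),\dots$), together with $A_5\times A_5$ and the almost simple overgroups contained in $\mathrm{Aut}(T)$. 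For each of these I would compute $S_c$ directly from the character table and verify that the value exceeds $68$, the single exception being $Q\cong A_5$, where the value equals $68$.

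The main obstacle is precisely this final verification. While the $\sqrt{|Q|}$ estimate makes the large cases immediate and the quotient reduction cleanly isolates the monolithic groups, confirming that $A_5$ is the unique minimizer requires the classification of finite simple groups to pin down the short list of small socles, followed by a careful (if elementary) pass through the relevant character tables for those small simple groups and their almost simple extensions. A secondary point to handle with care is the equality analysis, ensuring both that $A_5$ is the only small group attaining $68$ and that any nontrivial $N$ strictly increases the sum, so that no non-solvable group other than $A_5$ itself can meet the bound.
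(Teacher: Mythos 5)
Your proposal is correct, and it takes a genuinely different route from the paper. The paper argues by induction on $|G|$: it passes to a maximal normal subgroup $M$ and a minimal normal subgroup $N\cong S^k$, uses the codegree divisibility facts of Qian--Wang--Wei (for any two primes $p,q\in\pi(S)$ some character has codegree divisible by $pq$) to force $|\pi(S)|=3$, invokes Herzog's classification of simple groups with exactly three prime divisors to reduce to eight candidates, checks those against the ATLAS, and then needs the Magaard--Tong-Viet extension theorem to produce a character lying over the degree-$5$ character of $N\cong A_5$ in order to eliminate the delicate case $A_5<G$, which leads to the quotients $S_5$ and $A_5\times C_2$. You instead pass once to a quotient $Q=G/N$ that is non-solvable with all proper quotients solvable, observe that $Q$ is monolithic with non-abelian monolith and hence has a faithful irreducible character $\chi$, and use the clean estimate $\mathrm{cod}(\chi)=|Q|/\chi(1)>\sqrt{|Q|}$ (valid since $\chi(1)^2<|Q|$) to dispose of all $|Q|\ge 68^2=4624$ at a stroke; what remains is a finite check over almost simple groups with socle among the simple groups of order less than $4624$. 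Your bound buys uniformity and entirely avoids both the Magaard--Tong-Viet machinery and the paper's final case analysis (note $A_5\times C_2$ is not monolithic, so it never arises for you; likewise the socle $A_5\times A_5$ is vacuous, since a monolithic group permutes the two factors transitively and so has order at least $7200>4624$); the cost is a somewhat longer list of character-table verifications (roughly ten socles plus their small almost simple overgroups such as $S_5$, $\mathrm{PGL}_2(7)$, the extensions of $A_6$, etc.) compared with the paper's eight Herzog groups, and you appeal to the enumeration of simple groups of small order where the paper appeals to Herzog's theorem --- inputs of comparable depth. The one piece you leave unexecuted is precisely that finite verification, which you flag honestly; it is routine (e.g.\ $S_c(S_5)=151$, $S_c(L_2(7))=186$, $S_c(A_6)=311$, and for the larger simple groups a single faithful character of minimal degree already has codegree exceeding $68$), and your equality analysis --- strict monotonicity of $S_c$ under proper quotients forcing $N=1$ when $S_c(G)=68$ --- matches the paper's Lemma 3.1(1) and is handled correctly.
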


From the above theorem, if $G$ is a non-solvable group such that $S_c(G)=S_c(A_5),$  then $G\cong A_5.$ Since the alternating group $A_5$ is the smallest nonabelian simple group, one may ask if it is still true when $G$ is solvable. However, the conclusion is not always true because we noticed that the solvable group $G=A_4\rtimes C_4$ (SmallGroup(48,30)) has the same codegree sum as $A_5.$

The work in this paper was completed by the second author (P.h.D student) under the supervision of the first author at Kent State University. The contents of this paper may appear as part of the second author's P.h.D dissertation.

\section{The invariant ${\rm fcod}(G)$}

We begin by proving the following lemma, which plays an important role in constructing examples. 

\begin{lemma}\label{lem2.1}
Let $G=A\times B$ be a direct product of $A$ and $B$, then $S_{c}(G)\leq S_{c}(A)\cdot S_{c}(B)$ and hence ${\rm fcod}(G)\leq {\rm fcod}(A)\cdot {\rm fcod}(B).$ Furthermore, the equality holds if $(|A|, |B|)=1$. 
\end{lemma}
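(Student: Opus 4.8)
The plan is to understand the irreducible characters of a direct product via the standard fact that $\mathrm{Irr}(A\times B)=\{\alpha\times\beta : \alpha\in\mathrm{Irr}(A),\ \beta\in\mathrm{Irr}(B)\}$, where $(\alpha\times\beta)(a,b)=\alpha(a)\beta(b)$. First I would compute $\mathrm{cod}(\alpha\times\beta)$ in terms of $\mathrm{cod}(\alpha)$ and $\mathrm{cod}(\beta)$. The degree factors as $(\alpha\times\beta)(1)=\alpha(1)\beta(1)$, so the only delicate point is the kernel: one has $\ker(\alpha\times\beta)=(\ker\alpha\times\ker\beta)\cdot$(something), and in general $\ker(\alpha\times\beta)$ can be strictly larger than $\ker\alpha\times\ker\beta$, which is exactly why we should expect an inequality rather than an identity.

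The key computation is therefore to show
\begin{equation*}
\mathrm{cod}(\alpha\times\beta)=\frac{|A\times B:\ker(\alpha\times\beta)|}{\alpha(1)\beta(1)}\le \frac{|A:\ker\alpha|}{\alpha(1)}\cdot\frac{|B:\ker\beta|}{\beta(1)}=\mathrm{cod}(\alpha)\,\mathrm{cod}(\beta),
\end{equation*}
which reduces to establishing $\ker\alpha\times\ker\beta\le\ker(\alpha\times\beta)$, forcing $|A\times B:\ker(\alpha\times\beta)|\le |A:\ker\alpha|\,|B:\ker\beta|$. The containment is immediate since $(\alpha\times\beta)(a,b)=\alpha(a)\beta(b)$ equals $\alpha(1)\beta(1)$ whenever $a\in\ker\alpha$ and $b\in\ker\beta$. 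Summing over all pairs and using the distributive law,
\begin{equation*}
S_c(A\times B)=\sum_{\alpha,\beta}\mathrm{cod}(\alpha\times\beta)\le\sum_{\alpha,\beta}\mathrm{cod}(\alpha)\,\mathrm{cod}(\beta)=\Bigl(\sum_\alpha\mathrm{cod}(\alpha)\Bigr)\Bigl(\sum_\beta\mathrm{cod}(\beta)\Bigr)=S_c(A)\,S_c(B),
\end{equation*}
which gives the first inequality; dividing through by $|A\times B|=|A|\,|B|$ yields the statement for $\mathrm{fcod}$.

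For the coprime case $(|A|,|B|)=1$, the plan is to show the kernel actually satisfies $\ker(\alpha\times\beta)=\ker\alpha\times\ker\beta$, upgrading every inequality above to an equality. The reason is that $A\times B$ projects onto $A$ and $B$ with coprime orders, so any normal subgroup $N$ of $A\times B$ decomposes as $N=(N\cap A)\times(N\cap B)$; applying this to $N=\ker(\alpha\times\beta)$ and identifying $N\cap A$, $N\cap B$ with $\ker\alpha$, $\ker\beta$ respectively gives the claim. I expect this coprime step to be the main obstacle, since it requires the structural fact about normal subgroups of direct products with coprime factors rather than the purely formal character-theoretic manipulation used for the inequality; care is needed to verify $\ker(\alpha\times\beta)\cap A=\ker\alpha$, which follows because the restriction of $\alpha\times\beta$ to the $A$-factor is a multiple of $\alpha$. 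Once equality of kernels is in hand, the codegree identity $\mathrm{cod}(\alpha\times\beta)=\mathrm{cod}(\alpha)\,\mathrm{cod}(\beta)$ holds termwise, and summation gives $S_c(A\times B)=S_c(A)\,S_c(B)$ exactly.
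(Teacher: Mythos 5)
Your proposal is correct, and its first half (the inequality) coincides with the paper's argument: both use $\mathrm{Irr}(A\times B)=\{\alpha\times\beta\}$ together with the containment $\ker\alpha\times\ker\beta\le\ker(\alpha\times\beta)$ and then sum termwise. Where you genuinely diverge is the coprime equality case. The paper argues at the level of character values: writing $g=hk\in\ker(\chi\times\psi)$, it deduces $h\in Z(\chi)$, $k\in Z(\psi)$ from $|\chi(h)|\le\chi(1)$, $|\psi(k)|\le\psi(1)$, restricts to the centers to get linear characters $\lambda,\mu$ with $\lambda(h)=\mu(k)^{-1}$, and then uses coprimality of element orders to force $\lambda(h)=\mu(k)=1$, hence $\ker(\chi\times\psi)=\ker\chi\times\ker\psi$. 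You instead invoke the structural fact that when $(|A|,|B|)=1$ every subgroup $N\le A\times B$ splits as $N=(N\cap A)\times(N\cap B)$, apply it to $N=\ker(\alpha\times\beta)$, and identify the factors via the (correct) observation that $(\alpha\times\beta)$ restricted to the $A$-factor is $\beta(1)\alpha$, so $\ker(\alpha\times\beta)\cap A=\ker\alpha$. This route is cleaner and arguably more robust: it avoids the center-of-character machinery entirely, and the splitting fact has a two-line proof you should include to make the argument self-contained --- for $(a,b)\in N$, choose by the Chinese Remainder Theorem an integer $m$ with $m\equiv 1\ (\mathrm{mod}\ o(a))$ and $m\equiv 0\ (\mathrm{mod}\ o(b))$ (possible since $o(a)\mid |A|$ and $o(b)\mid |B|$ are coprime), so that $(a,b)^m=(a,1)\in N$, and symmetrically $(1,b)\in N$; note that normality of $N$ is not actually needed. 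What the paper's value-theoretic approach buys in exchange is that it works directly inside the character computation without appealing to any structure theory of direct products; both arguments are equally valid, and yours localizes the coprimality hypothesis exactly where it is used.
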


\begin{proof}
Let $\chi\in {\rm Irr}(A)$ and $\psi\in {\rm Irr}(B)$, then 
$\ker\chi\times \ker\psi\leq \ker(\chi\times \psi)$ and so ${\rm cod}(\chi\times \psi)\leq {\rm cod}(\chi)\cdot {\rm cod}(\psi).$ It follows from {{\cite[Theorem 4.21]{Isaacs1976}}} that the characters $\chi\times \psi$ for $\chi\in {\rm Irr}(A)$ and $\psi\in {\rm Irr}(B)$ are exactly the irreducible characters of $G$. Hence, $S_{c}(G)\leq S_{c}(A)\cdot S_{c}(B)$ and so ${\rm fcod}(G)\leq {\rm fcod}(A)\cdot {\rm fcod}(B).$ 

Now assume  $(|A|, |B|)=1$. Let $g=hk\in \ker(\chi\times \psi)$ for some $h\in A$ and $k\in B.$ Then
$$(\chi\times \psi)(hk)=\chi(h)\psi(k)=\chi(1)\psi(1).$$
It follows that $h\in {Z}(\chi)$ and $k\in {Z}(\psi)$.
Let $\chi_{{Z}(\chi)}=\chi(1)\lambda$ and $\psi_{ Z(\psi)}=\psi(1)\mu$, where $\lambda$ and $\mu$ are linear characters of $ Z(\chi)$ and $Z(\psi)$, respectively.
So $\lambda(h)=\mu(k)^{-1}$. Let $o(g)$ denote the order of $g\in G.$ If either $\lambda(h)\neq 1$ or $\mu(k)\neq 1$, then $o(h)=o(k)$, contrary to $(|H|, |K|)=1$. It follows that $\lambda(h)=\mu(k)=1$ and therefore $h\in \ker{\chi}$ and $k\in \ker\psi$. Hence, $\ker\chi\times \ker\psi=\ker(\chi\times \psi)$, which implies that $${\rm cod}(\chi\times \psi)=\frac{|A\times B: \ker(\chi\times \psi)|}{\chi\times \psi(1)}
={\rm cod}(\chi)\cdot {\rm cod}(\psi).$$
Thus $S_{c}(G)=S_{c}(A)\cdot S_{c}(B)$ and ${\rm fcod}(G)={\rm fcod}(A)\cdot {\rm fcod}(B)$, as wanted.
\end{proof}

The hypothesis $(|A|, |B|)=1$ cannot be dropped, as shown by the examples: $S_c(C_2\times C_2)=7$ but $S_c(C_2)\cdot S_c(C_2)=9.$ Next we compute the values of ${\rm fcod}(G)$ for some specific groups.

\begin{example}\label{examp2.2} (1) If $G$ is a dihedral groups $D_{2^n}$ of order $2^n$ with $n\geq 3,$ then $S_c(G)=1+2\cdot 3+\sum_{i=3}^{n}2^{i-1}\cdot 2^{i-3}=7+\frac43(2^{2n-4}-1)$ and so ${\rm fcod}(G)\rightarrow\infty$ as $n\rightarrow \infty.$

(2) Let $G\cong C_8\rtimes C_2^2$ (SmallGroup(32,43)), then $S_c(G)=31.$ Therefore, ${\rm fcod}(G)=\frac{31}{32}<1.$

(3) ${\rm fcod}(A_5)=\frac{68}{60}>1$ and ${\rm fcod}(A_6)=\frac{311}{360}<1,$ where $A_n$ is the alternating group of degree $n.$ 
\end{example}

Using the groups described above, we are able to construct some families of groups as follows.

\begin{proposition} {\rm (1)} There are families of solvable groups $\{G\}$ and $\{H\}$ such that ${\rm fcod}(G)\rightarrow 0$ as $|G|\rightarrow \infty,$ and ${\rm fcod}(H)\rightarrow \infty$ as $|H|\rightarrow \infty.$ 

{\rm (2)} There exist families of non-solvable groups $\{G\}$ and $\{H\}$ such that ${\rm fcod}(G)\rightarrow 0$ as $|G|\rightarrow \infty,$ and ${\rm fcod}(H)\rightarrow \infty$ as $|H|\rightarrow \infty.$ 
\end{proposition}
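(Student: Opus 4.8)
The plan is to build the four families explicitly using the three computational anchors already supplied in Example~\ref{examp2.2}, together with the submultiplicativity of $S_c$ under direct products from Lemma~\ref{lem2.1}. The guiding principle is that ${\rm fcod}$ is multiplicative on coprime direct products, so taking direct powers of a single group $K$ drives ${\rm fcod}$ either to $0$ or to $\infty$ according to whether ${\rm fcod}(K)<1$ or ${\rm fcod}(K)>1$. Concretely, if $(|K^{i}|,|K|)$ is handled by iterating Lemma~\ref{lem2.1} over a \emph{coprime} collection, then ${\rm fcod}(K^{\times m})={\rm fcod}(K)^{m}$, which tends to $0$ or $\infty$ as $m\to\infty$. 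The subtlety is that direct powers $K\times\cdots\times K$ are never coprime with themselves, so the clean equality in Lemma~\ref{lem2.1} does not apply directly; I address this below.

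For part~(1) (solvable groups), I would produce the family with ${\rm fcod}(H)\to\infty$ directly from Example~\ref{examp2.2}(1): the dihedral groups $H=D_{2^n}$ are solvable and already satisfy ${\rm fcod}(D_{2^n})\to\infty$ as $n\to\infty$, so no product construction is needed. For the family with ${\rm fcod}(G)\to 0$, I would take the solvable group $G_0\cong C_8\rtimes C_2^2$ of Example~\ref{examp2.2}(2), which has ${\rm fcod}(G_0)=\tfrac{31}{32}<1$, and form iterated direct products. Here I use the inequality half of Lemma~\ref{lem2.1}, namely ${\rm fcod}(A\times B)\le{\rm fcod}(A)\cdot{\rm fcod}(B)$, which holds with \emph{no} coprimality hypothesis. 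Thus ${\rm fcod}(G_0^{\times m})\le{\rm fcod}(G_0)^{m}=(31/32)^{m}\to 0$, and each $G_0^{\times m}$ is solvable of order $32^m\to\infty$. So the inequality, not the equality, suffices for the vanishing families, which is convenient.

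For part~(2) (non-solvable groups), the ${\rm fcod}(G)\to 0$ family comes from $A_6$: by Example~\ref{examp2.2}(3), ${\rm fcod}(A_6)=\tfrac{311}{360}<1$, so ${\rm fcod}(A_6^{\times m})\le(311/360)^{m}\to 0$ by the same inequality, and each $A_6^{\times m}$ is non-solvable. For the ${\rm fcod}(H)\to\infty$ family I cannot use a direct power of $A_5$ alone, since I only have the inequality in the non-coprime case and that points the wrong way. Instead I would multiply a \emph{fixed} non-solvable factor by the diverging solvable family: set $H=A_5\times D_{2^n}$ with $2^n$ coprime to $60=|A_5|$—but $|A_5|$ is even, so I would instead choose the dihedral orders to be odd, i.e. use dihedral groups $D_{2p^n}$ of order $2p^n$ for an odd prime $p\ge 7$ (coprime to $60$), for which an analogous computation gives ${\rm fcod}\to\infty$; then Lemma~\ref{lem2.1} applies with equality to give ${\rm fcod}(A_5\times D_{2p^n})={\rm fcod}(A_5)\cdot{\rm fcod}(D_{2p^n})\to\infty$, with each such group non-solvable. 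Alternatively, and more cleanly, I would cross the diverging solvable dihedral family with a fixed simple factor of coprime order, or simply observe ${\rm fcod}(A_5)^{m}$ is a lower bound via a coprime-component argument if one stacks pairwise-coprime non-isomorphic simple or near-simple pieces.

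The main obstacle is the divergent non-solvable family, because the only tool for bounding ${\rm fcod}$ of a repeated (non-coprime) direct product, Lemma~\ref{lem2.1}, gives an upper bound, whereas divergence to $\infty$ needs a lower bound. The resolution is to keep the non-solvable part as a single fixed factor of order coprime to the growing part, so that the equality clause of Lemma~\ref{lem2.1} converts the known divergence of a solvable family (dihedral groups of suitably chosen odd-dominated order) into divergence of the product; verifying the coprimality and recomputing $S_c$ for the odd-order dihedral variant is the one genuinely computational point, and it parallels Example~\ref{examp2.2}(1) closely enough that I expect no new difficulty.
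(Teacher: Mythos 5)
Your part (1) and the vanishing family in part (2) coincide with the paper's proof (powers of $C_8\rtimes C_2^2$ resp.\ $A_6$ via the inequality half of Lemma~\ref{lem2.1}, and $D_{2^n}$ for the solvable divergent family). The genuine gap is in your divergent non-solvable family $H=A_5\times D_{2p^n}$: the coprimality claim is false. A dihedral group of order $2p^n$ has \emph{even} order no matter how odd $p$ is, and $|A_5|=60$ is even, so $\gcd(|A_5|,|D_{2p^n}|)=2\neq 1$. Hence the equality clause of Lemma~\ref{lem2.1} does not apply, and the unconditional half of the lemma gives only ${\rm fcod}(A_5\times D_{2p^n})\le {\rm fcod}(A_5)\cdot{\rm fcod}(D_{2p^n})$, an upper bound, which is the wrong direction for divergence. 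Your fallback suggestions fail for the same structural reason: every dihedral group has even order, and by Feit--Thompson every non-solvable group (in particular every nonabelian simple group) has even order, so there is no non-solvable factor of order coprime to any dihedral group, and no pairwise-coprime stack of simple pieces exists at all.

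The paper avoids this trap by taking the growing coprime factor to be cyclic of odd prime order: $H=A_5\times C_p$ with $p>5$, where ${\rm fcod}(C_p)=\frac{1+p(p-1)}{p}\to\infty$ as $p\to\infty$ and $\gcd(60,p)=1$, so the equality clause of Lemma~\ref{lem2.1} gives ${\rm fcod}(H)=\frac{68}{60}\cdot\frac{1+p(p-1)}{p}\to\infty$. Substituting $C_p$ (or $C_{p^n}$, $p>5$) for your dihedral family makes your argument go through verbatim. Alternatively, your dihedral construction could be rescued without any coprimality by a direct lower bound: the characters $1_{A_5}\times\psi$ for $\psi\in{\rm Irr}(B)$ satisfy $\ker(1_{A_5}\times\psi)=A_5\times\ker\psi$, hence ${\rm cod}(1_{A_5}\times\psi)={\rm cod}(\psi)$, giving $S_c(A_5\times B)\ge S_c(B)$ and ${\rm fcod}(A_5\times B)\ge {\rm fcod}(B)/60\to\infty$; but that argument does not appear in your writeup, which as written rests on a coprimality hypothesis that can never hold.
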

\begin{proof} Let $S$ be a group described in Example \ref{examp2.2}(2) and $G=S\times S\times \cdot\cdot\cdot \times S$ be the direct product of $n$ copies of $S.$ It follows from Lemma \ref{lem2.1} that ${\rm fcod}(G)\leq ({\rm fcod}(S))^n=(\frac{31}{32})^n \rightarrow 0$ as $n\rightarrow \infty.$ Together with Example \ref{examp2.2}(1), the statement (1) follows.

Now we prove (2). Let $G$ be the direct product of $n$ copies of $A_6.$ Using a similar proof as above, we have that ${\rm fcod}(G)\rightarrow 0$ as $|G|\rightarrow \infty.$ Now let $H=A_5\times C_p$, where $p>5$ is a prime. Clearly, $H$ is nonsolvable. Then by Lemma \ref{lem2.1}, ${\rm fcod}(H)={\rm fcod}(A_5)\cdot {\rm fcod}(C_p)=\frac{68}{60}\cdot \frac{1+p(p-1)}{p} \rightarrow \infty$ as $p\rightarrow \infty.$ 
\end{proof}

From the above examples and proposition, we may see that  the invariant ${\rm fcod} (G)$ has so many possibilities. Next we will focus on the special case when ${\rm fcod} (G)=1.$ It is clear that the symmetric group $S_3$ is the smallest nontrivial group such that ${\rm fcod} (G)=1.$ It is natural to ask: Are there infinitely many groups satisfying ${\rm fcod}(G)=1?$ How can we describe the stucture of such groups? 

We believe that the first question will have a positive answer based on the following proposition. However, we cannot answer this question completely because of the difficulties in the aspect the number theory.

\begin{proposition} Let $G=C_q \rtimes C_p$ be a non-abelian group of order $pq,$ where $p,q$ are distinct primes. Then ${\rm fcod} (G)=1$ if and only if $q=p^2-p+1.$     
\end{proposition}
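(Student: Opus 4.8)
The plan is to compute $S_c(G)$ and $|G|$ explicitly for the non-abelian group $G = C_q \rtimes C_p$ of order $pq$, and then solve the equation $S_c(G) = |G|$.

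First I would work out the character theory of $G$. Since $G = C_q \rtimes C_p$ is non-abelian of order $pq$ with $p, q$ distinct primes, the action of $C_p$ on $C_q$ is nontrivial, which forces $p \mid q - 1$ (so in particular $p < q$). The commutator subgroup is $G' = C_q$, and $G/G' \cong C_p$, so $G$ has exactly $p$ linear characters, each with codegree $p$ (since a linear character $\lambda$ satisfies $\lambda(1) = 1$ and $|G : \ker\lambda|$ divides $p$; the principal character contributes codegree $1$ and the remaining $p-1$ nontrivial linear characters each have kernel $C_q$, giving codegree $p$). The nonlinear irreducible characters arise from inducing nontrivial characters of the normal subgroup $C_q$: the $q - 1$ nontrivial linear characters of $C_q$ fuse into orbits of size $p$ under the $C_p$-action, giving $(q-1)/p$ orbits, each inducing to a distinct irreducible character of $G$ of degree $p$. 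Each such character $\chi$ is faithful (its kernel is trivial, since it does not contain $C_q$ and the action is faithful on $C_q$), so $\mathrm{cod}(\chi) = |G|/\chi(1) = pq/p = q$.

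Next I would assemble the codegree sum. Summing over all irreducible characters:
\begin{equation*}
S_c(G) = \underbrace{1}_{\text{principal}} + \underbrace{(p-1)\cdot p}_{\text{nontrivial linear}} + \underbrace{\frac{q-1}{p}\cdot q}_{\text{nonlinear}}.
\end{equation*}
Then ${\rm fcod}(G) = 1$ is equivalent to $S_c(G) = |G| = pq$, i.e.
\begin{equation*}
1 + p(p-1) + \frac{q(q-1)}{p} = pq.
\end{equation*}
Clearing the denominator by multiplying through by $p$ gives a polynomial identity in $p$ and $q$, and the goal is to show this reduces exactly to $q = p^2 - p + 1$.

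The main obstacle — though I expect it to be modest rather than genuinely hard — is the algebraic manipulation combined with verifying that the divisibility/integrality constraints are consistent. After multiplying by $p$, I would collect terms to obtain a quadratic relation and factor it so as to isolate $q = p^2 - p + 1$; I would then confirm that this value of $q$ indeed satisfies $p \mid q - 1$ (note $q - 1 = p^2 - p = p(p-1)$, which is visibly divisible by $p$, so the construction is consistent and $(q-1)/p = p - 1$ is a genuine integer). The reverse direction is immediate: substituting $q = p^2 - p + 1$ back into the expression for $S_c(G)$ should yield exactly $pq$, establishing the ``if'' part. The only subtlety to flag is that the equation may a priori admit other integer solutions, so I would either factor the quadratic completely to rule them out or invoke the standing constraint that $q$ is prime to discard any extraneous root.
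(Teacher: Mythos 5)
Your proposal is correct and takes essentially the same route as the paper: compute $S_c(G)=1+p(p-1)+\frac{q(q-1)}{p}$ (the paper asserts this sum as ``obvious,'' whereas you supply the character-theoretic justification), set it equal to $pq$, clear the denominator, and factor the resulting quadratic, which is exactly $(q-p)\bigl(q-(p^2-p+1)\bigr)=0$. One small correction to your final hedge: the extraneous root is $q=p$, which is discarded by the standing hypothesis that $p$ and $q$ are \emph{distinct} primes (as the paper does), not by the primality of $q$ alone.
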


\begin{proof} It is obvious that $S_c(G)=1+(p-1)p+\frac{(q-1)q}{p}.$ Then ${\rm fcod} (G)=1$ if and only if $1+(p-1)p+\frac{(q-1)q}{p}=pq,$ which is equivalent to $q^2+(p^2-1)q+p(p^2-p+1)=(q-p)(q-(p^2-p+1))=0.$  The result follows from  $p\not= q$.
\end{proof}

We do find some pairs of $(p,q),$ such as $(2,3), (3,7), (13,157),$ so that $q=p^2-p+1.$ However we do not know if there are infinitely many such prime pairs $(p,q).$ On the other hand, we observe that if $G$ is a $p$-group, then ${\rm fcod}(G)\not=1$.

\begin{proposition} Let $G$ be $p$-group. Then $S_c(G)\equiv 1({\rm mod}\ {p})$ and so ${\rm fcod} (G)\not=1$.
\end{proposition}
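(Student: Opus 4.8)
The plan is to show that every irreducible character of $G$ contributes a power of $p$ to $S_c(G)$, that exactly one of them — the principal character — contributes the value $1$, and that all the others contribute a positive power of $p$. Summing then gives $S_c(G)\equiv 1\ (\mathrm{mod}\ p)$ at once, from which the statement about $\mathrm{fcod}(G)$ follows.

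First I would record that, since $G$ is a $p$-group, for each $\chi\in\mathrm{Irr}(G)$ both $|G:\ker\chi|$ and $\chi(1)$ are powers of $p$, and $\chi(1)$ divides $|G:\ker\chi|$ (because $\chi$ is a faithful irreducible character of $G/\ker\chi$ and the degree of an irreducible character divides the order of the group). Hence $\mathrm{cod}(\chi)=|G:\ker\chi|/\chi(1)$ is a non-negative integer power of $p$.

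Next I would pin down exactly which characters have $\mathrm{cod}(\chi)=1$. Writing $|G:\ker\chi|=p^a$ and $\chi(1)=p^b$, the inequality $\chi(1)^2\le |G:\ker\chi|$ (which holds since $\chi$ restricts to a faithful irreducible character of $G/\ker\chi$ and $\sum_{\psi}\psi(1)^2=|G/\ker\chi|$) gives $2b\le a$. If $\mathrm{cod}(\chi)=1$, i.e. $a=b$, then $2b\le b$, forcing $b=a=0$; thus $\chi(1)=1$ and $\ker\chi=G$, so $\chi$ is the principal character. Conversely $\mathrm{cod}(1_G)=1$. Therefore the principal character is the \emph{unique} irreducible character of codegree $1$, and every other irreducible character has codegree divisible by $p$.

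Summing over $\mathrm{Irr}(G)$ then yields $S_c(G)=1+\sum_{\chi\neq 1_G}\mathrm{cod}(\chi)\equiv 1\ (\mathrm{mod}\ p)$. Finally, since $|G|=p^n\equiv 0\ (\mathrm{mod}\ p)$ for a nontrivial $p$-group, the equality $\mathrm{fcod}(G)=1$ would force $S_c(G)=|G|\equiv 0\ (\mathrm{mod}\ p)$, contradicting $S_c(G)\equiv 1\ (\mathrm{mod}\ p)$; hence $\mathrm{fcod}(G)\neq 1$. I expect the only point requiring genuine care to be the identification of the codegree-$1$ characters in the third step, everything else being routine divisibility bookkeeping; one should also note the convention that $G$ is a nontrivial $p$-group, since the trivial group would give $\mathrm{fcod}=1$.
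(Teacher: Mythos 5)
Your proof is correct and follows essentially the same route as the paper's: both show that for every nonprincipal $\chi\in{\rm Irr}(G)$ the codegree is a power of $p$ exceeding $1$ (via $\chi(1)^2\le|G:\ker\chi|$ and the fact that both quantities are $p$-powers), so that $S_c(G)\equiv 1\ ({\rm mod}\ p)$ while $|G|\equiv 0\ ({\rm mod}\ p)$. Your closing remark that the trivial group must be excluded (since it has ${\rm fcod}=1$) is a fair observation of an implicit hypothesis in the paper's statement, but otherwise the two arguments coincide.
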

\begin{proof} If $\chi\not=1_G,$ we have $|G:{\rm ker}\chi|\geq 1+\chi(1)^2>\chi(1)^2$ and so $1\leq \chi(1)<|G:{\rm ker}\chi|/\chi(1).$ Since $G$ is a $p$-group, $\chi(1)^2$ divides $|G:{\rm ker}\chi|$  for all $\chi\in{\rm Irr}(G).$ It follows that $p|{\rm cod}(\chi)$ if $\chi\not=1_G.$ Hence, $S_c(G)\equiv 1({\rm mod}\ p)$ and so ${\rm fcod} (G)\not=1,$ as wanted.  
\end{proof}

\section{Lower bound of $S_c(G)$ for nonsolvable groups}

In this section, we start by presenting some basic properties of the codegrees, which will be used in our proof. By \cite[Lemma 2.1]{Qian1} and \cite[Lemma 2.3]{Qian1}, we have

\begin{lemma}\label{lem3.1} Let $\chi$ be an irreducible character of $G.$ 

{\rm (1)} For any normal subgroup $N$ of $G$ with $N\leq {\rm ker}\chi$, the codegree ${\rm cod}(\chi)$ is independent of the choice of such $N.$ In particular, $S_c(G/N)\leq S_c(G)$ with equality if and only if $N=1.$

{\rm (2)} If $S$ is a subnormal subgroup of $G$ and $\phi$ is an irreducible constituent of $\chi_S,$ then ${\rm cod}\phi\mid {\rm cod}\chi.$

{\rm (3)} If $G$ is a nonabelian simple group, then for any distinct primes $p,q\in\pi(G)$ where $\pi(G)$ denotes the prime divisors of $|G|,$ there exists $\chi\in {\rm Irr}(G)$ so that $pq\mid {\rm cod}\chi.$
\end{lemma}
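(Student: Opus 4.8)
The plan is to handle the three parts separately, since they are of quite different character: parts (1) and (2) are essentially bookkeeping built on the definition of codegree together with the cited results of Qian, Wang and Wei, whereas part (3) carries the real content.

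For part (1), I would first record the elementary fact that a codegree is insensitive to the ambient group once the kernel is fixed. If $N\unlhd G$ with $N\le\ker\chi$, then $\chi$ inflates from a unique $\bar\chi\in{\rm Irr}(G/N)$ with $\bar\chi(1)=\chi(1)$ and $\ker\bar\chi=(\ker\chi)/N$, so
$${\rm cod}_{G/N}(\bar\chi)=\frac{|G/N:(\ker\chi)/N|}{\bar\chi(1)}=\frac{|G:\ker\chi|}{\chi(1)}={\rm cod}_G(\chi),$$
which gives the asserted independence of the choice of $N$. The inflation map $\bar\chi\mapsto\chi$ is a bijection from ${\rm Irr}(G/N)$ onto $\{\chi\in{\rm Irr}(G):N\le\ker\chi\}$ preserving codegrees, so $S_c(G/N)=\sum_{N\le\ker\chi}{\rm cod}(\chi)\le\sum_{\chi\in{\rm Irr}(G)}{\rm cod}(\chi)=S_c(G)$ because every codegree is a positive integer. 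Equality forces $N\le\ker\chi$ for all $\chi$, hence $N\le\bigcap_{\chi\in{\rm Irr}(G)}\ker\chi=1$; conversely $N=1$ gives equality trivially.

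For part (2), I would argue by induction on the length of a subnormal chain $S=S_0\unlhd S_1\unlhd\cdots\unlhd S_r=G$, reducing to the case where $S$ is normal in $G$. In the normal case, with $\phi$ an irreducible constituent of $\chi_S$, I would compare ${\rm cod}(\chi)=|G:\ker\chi|/\chi(1)$ with ${\rm cod}(\phi)=|S:\ker\phi|/\phi(1)$ using the standard Clifford-theoretic divisibilities, namely that $\chi(1)/\phi(1)$ divides $|G:S|$ and that $\ker\chi\cap S$ equals $\ker(\chi_S)$, the intersection of the kernels over the $G$-orbit of $\phi$. Tracking the contribution of each prime to numerator and denominator then yields ${\rm cod}(\phi)\mid{\rm cod}(\chi)$; this is precisely \cite[Lemma 2.3]{Qian1}, which I would invoke after the reduction to the normal case.

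Part (3) is where I expect the difficulty to lie. Since $G$ is nonabelian simple, every nontrivial $\chi\in{\rm Irr}(G)$ has $\ker\chi=1$, so ${\rm cod}(\chi)=|G|/\chi(1)$, and the claim reduces to producing a single nontrivial irreducible character whose degree misses the full $p$-part and the full $q$-part of $|G|$ at the same time. A convenient sufficient target is a nontrivial character of degree coprime to $pq$: for such a $\chi$ the $p$- and $q$-parts of $|G|/\chi(1)$ are both full, whence $pq\mid{\rm cod}(\chi)$. To produce it I would use block theory, since height-zero characters in the principal $p$-block have $p'$-degree, and likewise for $q$, and then try to exhibit a common nontrivial character that is simultaneously of $p'$- and $q'$-degree. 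The main obstacle is exactly the word \emph{simultaneously}: separate existence of $p'$-degree and of $q'$-degree characters is routine, but forcing one character to control both primes at once does not follow formally, and I expect to fall back on the classification of finite simple groups, verifying the statement family by family and directly for the sporadic and small exceptional cases. That case analysis, rather than any single clean argument, is the heart of the proof.
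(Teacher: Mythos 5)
Your parts (1) and (2) are fine, and they mirror what the paper does: the paper offers no proof of this lemma at all, simply citing Lemmas 2.1 and 2.3 of Qian--Wang--Wei, and your inflation bookkeeping for (1) plus the reduction-to-normal-and-cite for (2) are correct (though in the normal case your ``tracking primes'' step is only clean after you first factor out $\ker\chi$, so that $S\cap\ker\chi=1$ and the ratio becomes $\frac{{\rm cod}(\chi)}{{\rm cod}(\phi)}=\frac{|G:S|}{d}\,|\ker\phi|$ with $d=\chi(1)/\phi(1)$ dividing $|G:S|$). The genuine gap is part (3), which you correctly identify as the heart of the lemma but then do not prove: you propose to find a nontrivial character of degree coprime to $pq$ and to verify its existence ``family by family'' via the classification, without carrying out any of that analysis. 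Worse, the target you chose is strictly stronger than what is needed and is itself a deep CFSG-dependent theorem (and its three-prime analogue fails), so the fallback you sketch is not a routine verification one could wave at.

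The obstacle you call ``simultaneously'' dissolves once you aim at the correct, weaker target, and the cited source closes part (3) with a short elementary argument, no classification needed. Since $G$ is simple, every nontrivial $\chi$ is faithful, so ${\rm cod}(\chi)=|G|/\chi(1)$, and $pq\mid{\rm cod}(\chi)$ holds as soon as $\chi(1)_p<|G|_p$ and $\chi(1)_q<|G|_q$; full coprimality to $pq$ is not required. Suppose no nontrivial character satisfies this, i.e., every nontrivial $\chi$ has $p$-defect zero or $q$-defect zero. Take $x\in G$ of order $p$ (Cauchy) and apply column orthogonality to the columns of $1$ and $x$:
$$0=\sum_{\chi\in{\rm Irr}(G)}\chi(1)\overline{\chi(x)}=1+\sum_{\chi\neq 1_G}\chi(1)\overline{\chi(x)}.$$
Characters of $p$-defect zero vanish on $p$-singular elements (Isaacs, Theorem 8.17), so only the remaining nontrivial characters contribute, and by assumption each of these has $|G|_q\mid\chi(1)$, in particular $q\mid\chi(1)$. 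Hence $-1=q\beta$ for an algebraic integer $\beta$, so $-1/q$ would be an algebraic integer, a contradiction. This produces the required $\chi$ directly, which is why the paper (via Qian--Wang--Wei) can state part (3) without any case analysis; your proposal, by contrast, leaves its main claim resting on an unexecuted and unnecessarily heavy program.
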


When working with nonsolvable groups, it is always necessary to understand some properties about finite nonabelian simple groups. Here our proof relies on the fact (see \cite[Theorem 1]{Herzog1968}):  If $G$ is a finite nonabelian simple group with $|\pi(G)|=3$, then $G$ is
isomorphic to one of the following groups: $A_5,$ $A_6,$ $L_2(7),$ $L_2(8),$ $L_2(17),$ $L_3(3),$ $U_3(3)$ or $U_4(2).$

\begin{proof}[Proof of Theorem \ref{thm1.1}] It suffices to prove that $G\cong A_5$ if $S_c(G)\leq S_c(A_5)=68.$ We work by induction on $|G|.$ Let $M$ be a maximal normal subgroup of $G.$ Then $G/M$ is a simple group. If $G/M$ is nonabelian simple, then $S_c(G/M)\leq S_c(G)=68$ and by induction we have that $M=1$ and $G\cong A_5,$ as desired. 

Now assume that $G/M$ is abelian. Then $G/M\cong C_p$ for some prime $p.$ Let $N$ be a minimal normal subgroup of $G$ contined in $M.$ Consider $\overline{G}=G/N.$ If $N$ is abelian, then $\overline{G}$ is nonsolvable. Since $S_c(\overline{G})< S_c(G)\leq 68,$ by induction $\overline{G}\cong A_5$ and so  $S_c(\overline{G})=68,$ a contradiction. It follows that $N$ is nonabelian. Then $N\cong S_1\times S_2\times \cdot\cdot\cdot \times S_k$, where $S_i\cong S_1=S$ are isomorphic nonabelian simple groups.  We claim that $N\cong S\cong A_5.$ Here we consider the set $\pi(S).$ Assume that $|\pi(S)|>3$ and let $2=p_0<p_1<p_2<p_3$ be the first four smallest prime divisors of $|S|.$ Then $p_3\geq 7.$ Applying Lemma \ref{lem3.1}, there are irreducible  characters $\chi_1,\chi_2,\chi_3\in {\rm Irr}(G)$ so that $p_2p_3\mid {\rm cod}\chi_1, p_1p_3\mid {\rm cod}\chi_2$ and $p_1p_2\mid {\rm cod}\chi_3$ respectively. If $\chi_i=\chi_j$ for some $i\not=j,$ then $p_1p_2p_3\mid {\rm cod}\chi_i.$ Notice that $p_1p_2p_3\geq 3\cdot5\cdot7=105>68$. Then $S_c(G)>{\rm cod}\chi_i\geq 105,$ which is a contradiction. Hence $\chi_i\not=\chi_j$ for any $i\not=j\in\{1,2,3\}.$ It follows that $S_c(G)\geq \sum_{i=1}^3{\rm cod}\chi_i\geq 3\cdot5+3\cdot7\cdot35=71>68.$ This contradiction indicates that $|\pi(S)|=3.$ As we mentioned before, all finite nonabelian simple groups whose orders have three prime divisors have been classified. If $S$ is isomotphic to  $A_6,$ $L_2(8),$ $L_2(17),$ $L_3(3),$ $U_3(3)$ or $U_4(2),$ then by \cite{Conw1985} there is an irreducible character of $S$ having codegree greater than 68. It follows by Lemma \ref{lem3.1}(2), $S_c(G)>68.$ Now assume that $S\cong L_2(7).$ Then by \cite{Conw1985} there are irreducible characters $\phi_1,\phi_2$ of $S$ so that ${\rm cod}\phi_1=2^3\cdot7$ and ${\rm cod}\phi_1=2^3\cdot3.$ If $\phi_1=\phi_2,$ then $2^3\cdot3\cdot7\mid {\rm cod}\phi_1$ and so $S_c(G)>68.$ If $\phi_1\not=\phi_2,$ then $S_c(G)>2^3\cdot7+2^3\cdot3=80>68.$ Both cases are impossible. Hence the only possibility is $S\cong A_5.$ A similar proof works to show that $N\cong S,$ thereby indicating the claim. 
Since $N\cong A_5,$ it follows from Lemma \ref{lem3.1}(2) that $G$ has irreducible characters $\chi_1, \chi_2$ such that $20|{\rm cod}\chi_1$ and $15|{\rm cod}\chi_2.$ In particular, by \cite[Theorem 1.1]{MagTong2011}, let $\theta\in {\rm Irr}(N)$ with $\theta(1)=5,$ then there exists $\chi_3\in {\rm Irr}(G)$ such that $(\chi_3)_N=\theta$ and so $12|{\rm cod}\chi_3.$ It is obvious that any two of these three characters cannot be the same. Hence by the assumption $S_c(G)\leq 68,$ we have that ${\rm cod}\chi_3=12$ or $24.$ Notice that $${\rm cod}\chi_3=\frac{|G:N|\cdot|N|}{5\cdot |{\rm ker}\chi_3|}=12\cdot\frac{|G:N|}{|{\rm ker}\chi_3|}.$$ 
If $\frac{|G:N|}{|{\rm ker}\chi_3|}=1,$ then $G=N\times {\rm ker}\chi_3$ since $N\cap {\rm ker}\chi_3={\rm ker}(\chi_3)_N={\rm ker}\theta=1.$ It follows that $G/{\rm ker}\chi_3\cong N\cong A_5.$ Since $|G:N|\geq |G:M|>1$, we have that ${\rm ker}\chi_3>1$ and hence $S_c(G)>S_c(G/{\rm ker}\chi_3)=68,$ a contradiction. Hence we may have that $|G:{\rm ker}\chi_3|=2|N|=120.$  It follows that $G/{\rm ker}\chi_3\cong S_5$ or $A_5\times C_2,$ both of which have codegree sum greater than 68. This completes the proof.  
\end{proof}

\end{document}